\newtheorem{prop}{Proposition}[section]
\newtheorem{thm}[prop]{Theorem}
\newtheorem{lem}[prop]{Lemma}
\newtheorem{cor}[prop]{Corollary}
\begin{document}
\title{On the global monodromy of a Lefschetz fibration
arising from the Fermat surface of degree 4}
\author{Yusuke Kuno}
\date{}
\maketitle

\begin{abstract}
A complete description of the global monodromy of a Lefschetz fibration
arising from the Fermat surface of degree $4$ is given.
As a by-product we get a positive relation among right hand Dehn twists
in the mapping class group of a closed orientable surface of genus $3$.
\end{abstract}

\section{Introduction}
The motivation of this work is an interest in the topological monodromy of surface bundles obtained by the following
way. Let $X\subset \mathbb{P}_N$ be a complex surface embedded in the complex projective space of dimension $N$.
We denote by $\mathbb{P}^N$ the dual projective space of $\mathbb{P}_N$, i.e., the space of all hyperplanes of
$\mathbb{P}_N$. The dual variety $X^{\vee}$ of $X$ is, by definition, the set of all hyperplanes of $\mathbb{P}_N$
tangent to $X$ at some point. Then we have a complex analytic family of compact Riemann surfaces over
$\mathbb{P}^N \setminus X^{\vee}$; the fiber over $H\in \mathbb{P}^N \setminus X^{\vee}$ is the hyperplane section
$H\cap X$.

If we regard such a family as an oriented surface bundle, its bundle structure is totally encoded
(at least when the genus of $H\cap X$ is $\ge 2$) in the associated topological monodromy $\rho$ from the fundamental
group $\pi_1(\mathbb{P}^N \setminus X^{\vee})$, which is non-trivial when $X^{\vee}$ is a hypersurface,
to the mapping class group $\Gamma_g$ of a closed orientable surface of genus $g$, where $g$ is the genus of
$H\cap X$. If a finite presentation of $\pi_1(\mathbb{P}^N \setminus X^{\vee})$ and a description of $\rho$
in terms of this presentation are obtained, we might say that the topological monodromy $\rho$ is understood.
However such a nice situation may not be expected in general. One reason for this is the difficulty of
the computations of $\pi_1(\mathbb{P}^N \setminus X^{\vee})$, see \cite{DL}.

Instead we consider to cut $\mathbb{P}^N \setminus X^{\vee}$ by a generic line. Let $L$
be a line ($1$-dimensional projective subspace) of $\mathbb{P}^N$ and consider the
restriction of the family over $\mathbb{P}^N\setminus X^{\vee}$ to $L\setminus (L\cap X^{\vee})$.
We focus on the associated topological monodromy $\rho^{\prime}$ from
$\pi_1(L\setminus (L\cap X^{\vee}))$ to $\Gamma_g$. If $L$ meets $X^{\vee}$
transversely, $L\cap X^{\vee}$ consists of finitely many points and the inclusion
$L \setminus (L\cap X^{\vee})\hookrightarrow \mathbb{P}^N \setminus X^{\vee}$
induces the surjection on the fundamental group level (the Zariski theorem of
Lefschetz type, see \cite{H}). Thus for instance to know the group ${\rm Im}(\rho)$,
called \textit{the universal monodromy group} in \cite{DL}, it suffices to consider
$\rho^{\prime}$ instead of $\rho$. Moreover, theory of Lefschetz pencils can be
applied to the study of $\rho^{\prime}$, as follows. There is a natural family of
algebraic curves over $L$; the fiber over $H\in L$ is the (possibly singular) hyperplane
section $H\cap X$. As in \cite{Katz} or \cite{L}, this family turns out to be
a Lefschetz fibration in the sense of \cite{GS}, Definition 8.1.4. In particular all
the singular fibers, which are over $L\cap X^{\vee}$, have one nodal singularity and
the local monodromy around each point of $L\cap X^{\vee}$ is the right hand Dehn twist
along a simple closed curve, called \textit{the vanishing cycle}. Thus the determination
of the positions of all the vanishing cycles on a fixed reference fiber will lead to a complete
description of \textit{the global monodromy} $\rho^{\prime}$. Also, as a by-product we will get a
positive relation among right hand Dehn twists in $\Gamma_g$, since
$\pi_1(L\setminus L\cap X^{\vee})$ admits a presentation by a standard generating system
subject to one relation, see the paragraph before Theorem \ref{thm:1-1}.

In this paper we investigate a particular example. Hereafter $X$ is the Fermat surface of degree $4$,
namely the smooth hypersurface in $\mathbb{P}_3$ defined by the equation
$$x_0^4+x_1^4+x_2^4+x_3^4=0,$$
where $[x_0:x_1:x_2:x_3]$ is a homogeneous coordinate system of $\mathbb{P}_3$.
In this case $X^{\vee}$ is an irreducible hypersurface of $\mathbb{P}^3$,
whose defining equation will be given in section 2. Let
$$\mathcal{F}:=\left\{ (x,H)\in \mathbb{P}_3 \times (\mathbb{P}^3\setminus X^{\vee});\ 
x\in H\cap X \right\}$$
and let
\begin{equation}
\rho \colon \pi_1(\mathbb{P}^3 \setminus X^{\vee},b_0)\rightarrow \Gamma_3
\label{eq:1-1}
\end{equation}
be the associated topological monodromy of the second projection
$\pi \colon \mathcal{F}\rightarrow \mathbb{P}^3 \setminus X^{\vee}$,
where $b_0$ is a base point. Note that for each $H\in \mathbb{P}^3 \setminus X^{\vee}$,
the hyperplane section $\pi^{-1}(H)=H\cap X \subset H\cong \mathbb{P}_2$
is a non-singular plane curve of degree 4. 

To state the result we prepare a terminology. Let $\{ v_i\}_i$ be a set of $n$ points
in $\mathbb{P}_1$ and choose a base point $b_0$ of $\mathbb{P}_1 \setminus \{ v_i\}_i$.
We say a set of $n$ based loops $\{ \lambda_i \}_i$ is a
\textit{standard generating system for $\pi_1(\mathbb{P}_1\setminus \{ v_i\}_i,b_0)$}
if each $\lambda_i$ is free homotopic to a loop nearby $v_i$ going once around
$v_i$ by counter-clockwise manner, and their product $\lambda_1\lambda_2\cdots \lambda_n$
is trivial as an element of $\pi_1(\mathbb{P}_1\setminus \{ v_i\}_i,b_0)$.

\begin{thm}
\label{thm:1-1}
Let $X$ be the Fermat surface of degree $4$ and $L$ a line of $\mathbb{P}^3$ meeting
$X^{\vee}$ transversely. Choose a base point $b_0$ of $L\setminus (L\cap X^{\vee})$.
Then there is a standard generating system $\lambda_1,\ldots,\lambda_{36}$ for
$\pi_1(L\setminus (L\cap X^{\vee}),b_0)$ such that the monodromy $\rho^{\prime}(\lambda_i)$
is given by the right hand Dehn twist along the simple closed curve $C_i$ on a
genus $3$ surface as shown in Figure $1.1$. Here, $\rho^{\prime}$ is the composition of
$\pi_1(L\setminus (L\cap X^{\vee}),b_0)\rightarrow \pi_1(\mathbb{P}^3\setminus X^{\vee},b_0)$
induced by the inclusion and $(\ref{eq:1-1})$.
\end{thm}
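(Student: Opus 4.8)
The plan is to realize the family explicitly as a Lefschetz pencil and then to locate the $36$ vanishing cycles one at a time on a single reference fiber. A line $L$ in the dual space $\mathbb{P}^3$ is the same datum as a pencil of planes in $\mathbb{P}_3$ sharing a common axis $\ell$; restricting $\mathcal{F}$ to $L$ therefore presents the family as the plane sections $H_t\cap X$ of the Fermat quartic cut out by the planes of this pencil. The generic fiber is a smooth plane quartic of genus $(4-1)(4-2)/2=3$, and the singular fibers sit over the $36$ points of $L\cap X^{\vee}$, their number matching $\deg X^{\vee}=d(d-1)^2=36$ for $d=4$; each is a one-nodal quartic whose node is the point at which the corresponding plane is tangent to $X$. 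Since the set of lines meeting $X^{\vee}$ transversely is connected, the system of vanishing cycles, up to the simultaneous action of $\Gamma_3$, is independent of the particular $L$, so I may choose $L$ (equivalently the axis $\ell$) to respect as much of the symmetry of $X$ — permutations of coordinates and multiplications by fourth roots of unity — as possible.

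First I would produce the defining equation of $X^{\vee}$ and, for the chosen pencil, solve for the $36$ tangent planes $H_1,\dots,H_{36}$, grouping them into orbits under the symmetries of $X$; the symmetry collapses the computation to a few representative degenerations. For each tangent plane I would pin down the point of tangency, which is the node of the limiting quartic and hence the point onto which the associated cycle collapses.

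The heart of the argument, and the step I expect to be the main obstacle, is to read off each vanishing cycle as an isotopy class of simple closed curve on one reference fiber $\Sigma_3$. My strategy is to arrange the pencil with enough real structure that the degeneration $H_t\to H_i$ is visible in the real (or complex-conjugation-symmetric) locus: as $t$ approaches a critical value a circle on the quartic shrinks onto the node, and by the Picard--Lefschetz theorem the local monodromy $\rho^{\prime}(\lambda_i)$ is the right hand Dehn twist along exactly this cycle. The delicate part is global rather than local: I must fix one system of loops $\lambda_1,\dots,\lambda_{36}$ that is a standard generating system (so $\lambda_1\cdots\lambda_{36}$ is trivial), and then transport each vanishing cycle along its path back to $\Sigma_3$, tracking how the cycles interleave so as to recover the precise arrangement drawn in Figure $1.1$. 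Controlling this simultaneous transport for all $36$ cycles, rather than each in isolation, is where the symmetry of $X$ is indispensable and where errors are most likely.

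Finally I would verify the answer for consistency. Writing $\tau_C$ for the right hand Dehn twist along a simple closed curve $C$, the triviality of $\lambda_1\cdots\lambda_{36}$ forces $\tau_{C_1}\cdots\tau_{C_{36}}$ to equal the identity in $\Gamma_3$; this is precisely the positive relation promised in the abstract and furnishes a stringent check on the list of cycles. As further confirmation I would check that the classes $[C_i]$ generate $H_1(\Sigma_3;\mathbb{Z})$ and that the resulting monodromy group is consistent with the expected image of $\rho$.
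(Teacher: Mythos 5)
Your outline agrees with the paper at the level of general strategy (reduce to one explicitly chosen line by connectedness of the set of transverse lines, invoke Picard--Lefschetz for the local monodromy, exploit the symmetries of $X$, and check the resulting positive relation), but the step you yourself identify as the heart of the argument is both missing and, as proposed, unworkable. Your mechanism for seeing each vanishing cycle --- a circle in the real (or conjugation-symmetric) locus of the fiber shrinking onto the node --- fails at the outset: the Fermat quartic $x_0^4+x_1^4+x_2^4+x_3^4=0$ has empty real locus, since a sum of fourth powers of real numbers vanishes only when all of them do, so every plane section is a real-point-free curve and no vanishing cycle can be exhibited as a circle of real points. Even allowing for some other anti-holomorphic involution, you give no procedure that actually pins down the isotopy class of a single $C_i$ on a concrete model of the genus-$3$ fiber, let alone the simultaneous arrangement of all $36$; ``symmetry of $X$'' only cuts the $36$ degenerations down to a handful of orbits (the paper uses exactly this, via $\mu_{i+9}=\sqrt{-1}\,\mu_i$), it does not locate the representatives. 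The consistency checks you propose at the end (the relation $t_{36}\cdots t_1=1$ and generation of $H_1$) are necessary but far from sufficient to certify Figure 1.1.

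What the paper does instead, and what your proposal would need to replace the real-locus idea, is a concrete combinatorial model of the fibers: each plane section $X_v$ is presented as a $4$-fold branched covering $p_v\colon X_v\to\mathbb{P}_1$ with $12$ simple branch points $a_1,\dots,a_{12}$; the permutation monodromy of $p_0$ is computed (numerically) to draw the reference fiber explicitly as a $4$-sheeted cover; and then, for each of the $36$ critical parameters $v_i$, the $12$ branch points are tracked along an explicit path $\mu_i$ until exactly two of them collide. The vanishing cycle is the connected component of the preimage of an arc joining the colliding pair, and the quotient-space argument of Section 7 (the homeomorphism $\bar{\pi}^{-1}(\mu_i)\cong M_i$) is what justifies that this curve, transported back to $t=0$, really is the cycle whose Dehn twist is $\rho^{\prime}(\lambda_i)$. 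Without some substitute for this apparatus --- a way to compute, not merely to describe, the position of each cycle --- your proof does not go through.
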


Since $\lambda_1\lambda_2\cdots \lambda_{36}=1$ and $\rho^{\prime}$
is an anti-homomorphism (see the conventions below), we immediately have the following
\begin{cor}
Let us denote by $t_i$ the right hand Dehn twist along $C_i$. Then the relation $t_{36}t_{35}\cdots t_2t_1=1$
holds in the mapping class group $\Gamma_3$.
\end{cor}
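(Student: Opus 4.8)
The plan is to feed the relation $\lambda_1\lambda_2\cdots\lambda_{36}=1$ through the anti-homomorphism $\rho^{\prime}$ and simply read off the resulting relation among the Dehn twists. First I would invoke the definition of a standard generating system recalled in the paragraph preceding Theorem \ref{thm:1-1}: the loops $\lambda_1,\ldots,\lambda_{36}$ are chosen precisely so that their product is trivial in $\pi_1(L\setminus(L\cap X^{\vee}),b_0)$, that is, $\lambda_1\lambda_2\cdots\lambda_{36}=1$. Applying $\rho^{\prime}$ to both sides gives $\rho^{\prime}(\lambda_1\lambda_2\cdots\lambda_{36})=\rho^{\prime}(1)=1$ in $\Gamma_3$, since any (anti-)homomorphism carries the identity to the identity.

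The only point requiring attention is the order in which the factors appear once $\rho^{\prime}$ has been applied. Because $\rho^{\prime}$ is an anti-homomorphism, $\rho^{\prime}(ab)=\rho^{\prime}(b)\rho^{\prime}(a)$, so expanding the left-hand side reverses the order of the factors:
\[
\rho^{\prime}(\lambda_1\lambda_2\cdots\lambda_{36})=\rho^{\prime}(\lambda_{36})\rho^{\prime}(\lambda_{35})\cdots\rho^{\prime}(\lambda_1).
\]
By Theorem \ref{thm:1-1} each $\rho^{\prime}(\lambda_i)$ equals the right hand Dehn twist $t_i$ along $C_i$, and substituting yields $t_{36}t_{35}\cdots t_2 t_1=1$ in $\Gamma_3$. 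I do not expect any genuine obstacle here: the corollary is an immediate formal consequence of the theorem, and the sole subtlety is keeping careful track of the reversal of the indexing forced by the anti-homomorphism convention.
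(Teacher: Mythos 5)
Your proposal is correct and is precisely the argument the paper intends: the corollary is stated as an immediate consequence of $\lambda_1\lambda_2\cdots\lambda_{36}=1$, the anti-homomorphism property $\rho^{\prime}(ab)=\rho^{\prime}(b)\rho^{\prime}(a)$, and the identification $\rho^{\prime}(\lambda_i)=t_i$ from Theorem \ref{thm:1-1}. Your careful tracking of the order reversal is exactly the one subtlety the paper flags by citing its conventions.
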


Also we can show the following
\begin{cor}
The topological monodromy $(\ref{eq:1-1})$ is surjective.
In other words, the universal monodromy group ${\rm Im}(\rho)$ coincides with $\Gamma_3$.
\end{cor}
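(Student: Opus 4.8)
The plan is to pass from $\rho$ to $\rho'$, identify $\mathrm{Im}(\rho')$ with a subgroup of $\Gamma_3$ generated by explicit Dehn twists, and then show those twists generate everything. I would first record the two formal reductions. Since $L$ meets $X^\vee$ transversely, the Zariski theorem of Lefschetz type cited in the introduction makes the inclusion-induced homomorphism $\pi_1(L\setminus(L\cap X^\vee),b_0)\to\pi_1(\mathbb{P}^3\setminus X^\vee,b_0)$ surjective; as $\rho'$ is the composite of this map with $\rho$, the two images coincide, so $\mathrm{Im}(\rho)=\mathrm{Im}(\rho')$ and it is enough to prove $\mathrm{Im}(\rho')=\Gamma_3$. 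Moreover the based loops $\lambda_1,\dots,\lambda_{36}$ generate $\pi_1(L\setminus(L\cap X^\vee),b_0)$, so, $\rho'$ being an anti-homomorphism, $\mathrm{Im}(\rho')$ is exactly the subgroup $\langle t_1,\dots,t_{36}\rangle$ generated by the right hand Dehn twists along the curves $C_1,\dots,C_{36}$ of Figure $1.1$. Thus the corollary is equivalent to the purely topological assertion that these thirty-six twists generate $\Gamma_3$.

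To establish that, I would invoke a known minimal generating set. By Humphries' theorem, $\Gamma_3$ is generated by the Dehn twists along a configuration of $2g+1=7$ simple closed curves of a fixed topological type (a chain of six curves, consecutive ones meeting once transversely and non-consecutive ones disjoint, together with one further curve meeting the chain in the standard way); Lickorish's eight-curve set is an alternative. The strategy is then to locate among $C_1,\dots,C_{36}$ a subfamily realizing one of these standard configurations. Once such a subfamily $C_{i_1},\dots,C_{i_7}$ is found, the change of coordinates principle supplies an orientation-preserving homeomorphism of the genus $3$ surface carrying Humphries' curves onto them; conjugation by its mapping class is an automorphism of $\Gamma_3$ taking the Humphries generators to $t_{i_1},\dots,t_{i_7}$, whence $\langle t_{i_1},\dots,t_{i_7}\rangle=\Gamma_3$. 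Since this lies inside $\langle t_1,\dots,t_{36}\rangle\subseteq\Gamma_3$, all three groups agree and we are done.

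The main obstacle is the concrete matching in the second step: one must read off from Figure $1.1$ the geometric intersection numbers among a well-chosen seven of the $C_i$ and check that their union has the same topological type as the Humphries configuration --- in practice, that the curves fill the surface (their complement is a single disk) and exhibit the chain-plus-one intersection pattern. This is a finite but fiddly surface-topology verification. Should no seven curves match on the nose, I would first straighten the configuration by replacing a troublesome $C_i$ with its image under a Dehn twist already known to lie in the group, using identities of the form $t_{t_a(b)}=t_a t_b t_a^{-1}$, and only then apply the change of coordinates principle.
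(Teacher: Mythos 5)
Your plan coincides with the paper's proof: the author likewise reduces to $\rho'$ via the Lefschetz-type surjectivity and then exhibits a Dehn--Lickorish--Humphries generating system of $\Gamma_3$ inside $\langle t_1,\dots,t_{36}\rangle$, namely the twists along $C_9$, $t_9(C_8)$, $C_{28}$, $t_{28}^{-1}(C_1)$, $C_{10}$, $t_{10}^{-1}(C_{11})$, and $t_3^{-1}t_{28}^{-1}(C_6)$ --- exactly the ``straightening'' by $t_{t_a(b)}=t_at_bt_a^{-1}$ that you anticipate in your last paragraph. The only part you leave open, the concrete identification of the seven curves from Figure 1.1, is precisely what the paper supplies.
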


\begin{proof}
The set of the right hand Dehn twists along the seven simple closed curves $C_9$,
$t_9(C_8)$, $C_{28}$, $t_{28}^{-1}(C_1)$, $C_{10}$, $t_{10}^{-1}(C_{11})$,
and $t_3^{-1}t_{28}^{-1}(C_6)$ constitutes a Dehn-Lickorish-Humphries
generating system of $\Gamma_3$ (see \cite{I}, Corollary 4.2.F).
Thus $\rho^{\prime}$ is surjective, so is $\rho$.
\end{proof}

\begin{center}
Figure 1.1
\input{figureprime1-1-1.tex}
\input{figureprime1-1-2.tex}
\input{figureprime1-1-3.tex}
\input{figureprime1-1-4.tex}
\end{center}

\newpage

\begin{center}
Figure 1.1(continued)
\input{figureprime1-1-5.tex}
\input{figureprime1-1-6.tex}
\end{center}

The study of the global monodromy of a holomorphic fibration of Riemann surfaces over a Riemann surface
via numerical analysis is initiated by Ahara \cite{A} and Matsumoto \cite{M}. They introduced
a holomorphic fibration $f\colon V_n\rightarrow \mathbb{P}_1$, where $V_n$ is the Fermat surface
of degree $n$. Their fibration is not a Lefschetz fibration and has more degenerated singular fibers.
Their method was to express the
general fibers as branched coverings of $\mathbb{P}_1$ and analyze the motions of the critical
points of these branched coverings. The analysis is based on Newton approximation, see \cite{A}, section 3.
Based on the result of \cite{A}, the global monodromy was described in terms of Dehn twists
for the case $n=5$ in \cite{M} (in this case the genus of the general fibers is 3).
Recently, Ahara and Awata \cite{AA} determined how general fibers of $f$ degenerate
to the singular fibers for all $n$, without numerical analysis.

The rest of the paper is devoted to the proof of Theorem \ref{thm:1-1}.
Note that the total space of our Lefschetz fibration $\bar{\pi}\colon \bar{\mathcal{F}}\rightarrow L$
(see section 5) is the blow up of $X$ at 4 points. We adopt the same method as \cite{A}, \cite{M}.
In section 2 we give the defining equation of $X^{\vee}$. In section 3 we cut
$\mathbb{P}^3 \setminus X^{\vee}$ by a line $L=L(c_1,c_2)$ whose defining equation has
two parameters $c_1$ and $c_2$. For a suitable choice of $c_1$ and $c_2$, $L$ will meet
$X^{\vee}$ transversely. We will introduce a homogeneous coordinate system $[u:v]$
to $L$ and will denote by $\mathbb{C}_L$ the set $L\setminus \{ [0:1]\}$. Then
we proceed to express general fibers as 4-branched coverings of $\mathbb{P}_1$.
In section 4 we introduce a projection $p_v$ from $X_v=v \cap X$
to $\mathbb{P}_1$ for $v=[1:v]\in \mathbb{C}_L\subset \mathbb{P}^3$
and prove its "tameness" over $[1:0]\in \mathbb{P}_1$ (see Lemma \ref{lem:4-1}).
Section 5 is a preparation for sections 6 and 7. We choose explicit values for $c_1$
and $c_2$. Most of the results in sections 6 and 7 depend on numerical analysis using a computer.
In section 6 we describe the projection $p_0\colon X_0 \rightarrow \mathbb{P}_1$ and
in section 7 we analyze motions of the critical values of $p_v$ caused by movements
of $v$ along suitable chosen paths in $L$ and give a complete description
of the topological monodromy $\rho^{\prime} \colon \pi_1(L\setminus (L\cap X^{\vee}))\rightarrow
\Gamma_3$. Theorem \ref{thm:1-1} will easily follow from Proposition \ref{prop:7-1}.

\noindent \textbf{Conventions about topological monodromy}

It is sometimes confusing that there are different kinds of conventions about
product of paths or product of maps, so let us fix the conventions in this paper:
1) for any two mapping classes $f_1$ and $f_2$, the multiplication $f_1\circ f_2$ means
that $f_2$ is applied first,
2) for any two homotopy classes of based loops $\ell_1$ and $\ell_2$, their product
$\ell_1\cdot \ell_2$ means that $\ell_1$ is traversed first.

Let $\Sigma$ be a closed oriented surface and $\pi\colon E\rightarrow B$ an oriented $\Sigma$-bundle.
Choose a base point $b_0\in B$ and fix an identification
$\phi\colon \Sigma \stackrel{\cong}{\rightarrow} \pi^{-1}(b_0)$. For each based loop
$\ell\colon [0,1]\rightarrow B$, consider the pull back $\ell^*(E)\rightarrow [0,1]$.
Since $[0,1]$ is contractible there exists a trivialization
$\Phi\colon \Sigma \times [0,1]\rightarrow \ell^*(E)$ such that $\Phi(x,0)=\phi(x)$.
By assigning the isotopy class of $\phi^{-1}\circ \Phi(x,1)$ to the homotopy class of $\ell$,
we obtain a map $\rho$, called \textit{the topological monodromy of $\pi\colon E\rightarrow B$},
from $\pi_1(B,b_0)$ to the mapping class group of $\Sigma$.
Under the conventions above, $\rho$ is an \textit{anti-homomorphism}, i.e.,
for $\ell_1,\ell_2\in \pi_1(B,b_0)$ we have
$$\rho(\ell_1\ell_2)=\rho(\ell_2)\rho(\ell_1).$$

\section{The defining equation of $X^{\vee}$}
Our first task is to describe the defining equation of $X^{\vee}$. The result
might be known, but we give it here since our numerical analysis
by a computer performed in sections 6 and 7 will heavily use it.
To begin with, we compute the degree of $X^{\vee}$. By using the formula of
Katz \cite{Katz} (5.5.1), it is computed as
$${\rm deg}(X^{\vee})=\int_X\frac{(1+h)^2}{c([X])}=\int_X\frac{(1+h)^2}{1+4h}=36.$$
Here, $h\in H^2(\mathbb{P}_3;\mathbb{Z})$ denotes the hyperplane class and
$c([X])\in H^*(\mathbb{P}_3;\mathbb{Z})$ denotes the total Chern class of the divisor $X$.
Let $[\alpha_0:\alpha_1:\alpha_2:\alpha_3]$ be the homogeneous coordinate system
of $\mathbb{P}^3$ dual to $[x_0:x_1:x_2:x_3]$. Namely,
$[\alpha_0:\alpha_1:\alpha_2:\alpha_3]\in \mathbb{P}^3$ is the hyperplane of $\mathbb{P}_3$
defined by $$\alpha_0x_0+\alpha_1x_1+\alpha_2x_2+\alpha_3x_3=0.$$

\begin{prop}
\label{prop:2-1}
Let $\omega=\exp (2\pi \sqrt{-1}/3)$ and
let $\beta_i$ be a formal indeterminate such that $\beta_i^3=\alpha_i$ for
$i=0,1,2$, and $3$.
Then the defining equation of $X^{\vee}$ is given by
\begin{equation}
\label{eq:2-1}
\prod_{0\le i_1,i_2,i_3\le 2}(\beta_0^4+\omega^{i_1}\beta_1^4+\omega^{i_2}\beta_2^4
+\omega^{i_3}\beta_3^4)=0.
\end{equation}
\end{prop}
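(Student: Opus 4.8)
The plan is to identify $X^{\vee}$ with the image of the Gauss map and then eliminate the point of tangency. Since $F=x_0^4+x_1^4+x_2^4+x_3^4$ has gradient $(4x_0^3,\ldots,4x_3^3)$, which vanishes only at the origin, the surface $X$ is smooth; hence a hyperplane $[\alpha_0:\cdots:\alpha_3]$ is tangent to $X$ at $p=[p_0:\cdots:p_3]$ exactly when it equals the embedded tangent plane $T_pX$, whose dual coordinates are $[\partial_0F(p):\cdots:\partial_3F(p)]=[p_0^3:p_1^3:p_2^3:p_3^3]$. Thus $X^{\vee}$ is the image of the morphism $\gamma\colon X\to\mathbb{P}^3$, $p\mapsto[p_0^3:\cdots:p_3^3]$, and as the image of a projective variety it is already closed. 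So I would first record the set-theoretic description: $[\alpha]\in X^{\vee}$ if and only if there is a tuple $(p_0,\ldots,p_3)$ with $p_i^3=\alpha_i$ for all $i$ and $\sum_i p_i^4=0$ (the projective scaling of $p$ is harmless, since $p_i\mapsto sp_i$ only multiplies $\sum_i p_i^4$ by $s^4$).

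Next I would eliminate $p$ using the cube roots $\beta_i$. Writing $p_i^3=\alpha_i=\beta_i^3$ forces $p_i=\omega^{k_i}\beta_i$ for some $k_i\in\{0,1,2\}$, and because $\omega^{4k_i}=\omega^{k_i}$ the condition $\sum_i p_i^4=0$ becomes $\sum_i\omega^{k_i}\beta_i^4=0$ for some $(k_0,\ldots,k_3)$. Factoring out $\omega^{k_0}$ and setting $i_j\equiv k_j-k_0\pmod 3$, this is equivalent to the vanishing of one factor $\beta_0^4+\omega^{i_1}\beta_1^4+\omega^{i_2}\beta_2^4+\omega^{i_3}\beta_3^4$ with $(i_1,i_2,i_3)\in\{0,1,2\}^3$; as $(k_1,k_2,k_3)$ ranges over all triples, $(i_1,i_2,i_3)$ sweeps out all of $\{0,1,2\}^3$. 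Hence $[\alpha]\in X^{\vee}$ precisely when the product $(\ref{eq:2-1})$ vanishes, giving the set-theoretic equality of zero loci.

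The step needing care is checking that the left side of $(\ref{eq:2-1})$, a priori a function of the formal roots $\beta_i$, genuinely descends to a polynomial in the honest coordinates $\alpha_i=\beta_i^3$. For this I would verify invariance under each substitution $\beta_i\mapsto\omega\beta_i$: replacing $\beta_j\mapsto\omega\beta_j$ for $j\in\{1,2,3\}$ cyclically permutes the factors via $i_j\mapsto i_j+1$, while $\beta_0\mapsto\omega\beta_0$ sends the factor indexed by $(i_1,i_2,i_3)$ to $\omega$ times the factor indexed by $(i_1-1,i_2-1,i_3-1)$, so the $27$ accumulated scalars multiply to $\omega^{27}=1$. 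Invariance under all four substitutions places the product in $\mathbb{C}[\beta_0^3,\ldots,\beta_3^3]=\mathbb{C}[\alpha_0,\ldots,\alpha_3]$, so $(\ref{eq:2-1})$ is an honest polynomial equation in $[\alpha_0:\cdots:\alpha_3]$.

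Finally, to confirm that $(\ref{eq:2-1})$ is the \emph{defining} equation and not a proper power of it, I would compare degrees: the product has degree $4\cdot 27=108$ in the $\beta_i$, hence degree $36$ in the $\alpha_i$, matching $\deg(X^{\vee})=36$ computed above. Since its zero locus is the irreducible hypersurface $X^{\vee}$ of the same degree $36$, the product must equal the irreducible defining polynomial of $X^{\vee}$ up to a nonzero scalar, and in particular is reduced. The main obstacle is really the bookkeeping in this descent step --- pinning down the indexing of the $27$ factors and the accumulated scalar $\omega^{27}$ exactly --- whereas the Gauss-map identification and the cube-root substitution are routine.
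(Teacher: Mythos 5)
Your proposal is correct and follows essentially the same route as the paper: both reduce membership in $X^{\vee}$ to the tangency conditions $p_i^3=\alpha_i$, $\sum_i p_i^4=0$, eliminate the cube roots to obtain the $27$ factors, observe the product descends to a degree-$36$ polynomial in the $\alpha_i$, and conclude by matching with $\deg(X^{\vee})=36$. The only (cosmetic) differences are that you argue homogeneously via the Gauss map where the paper normalizes $\alpha_0=y_0=1$, and that you spell out the descent and reducedness steps slightly more explicitly.
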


Remark that the left hand side of (\ref{eq:2-1}) is invariant under the
transformations $\{ \phi_i\}_{i=0}^3$ where $\phi_i$ is defined by
$\phi_i(\beta_j)=\omega\beta_j$ for $j\neq i$ and $\phi_i(\beta_i)=\beta_i$.
Thus it is in fact a homogeneous polynomial in $\alpha_i$'s and the degree is $36$.

\begin{proof}[Proof of Proposition \ref{prop:2-1}]
Since we know the degree of $X^{\vee}$ is also equal to $36$, it suffices to
show that $\alpha \in X^{\vee}$ if and only if $\alpha$ satisfies the equation (\ref{eq:2-1}).
Let $\alpha=[\alpha_0:\alpha_1:\alpha_2:\alpha_3]\in \mathbb{P}^3$ and assume that
$\alpha_0=1$. Let $P(x_0,x_1,x_2,x_3)=x_0^4+x_1^4+x_2^4+x_3^4$. By definition, $\alpha \in X^{\vee}$
if and only if there exists a point $y=[y_0:y_1:y_2:y_3]\in \mathbb{P}_3$ such that
\begin{equation}
\label{eq:2-2}
\begin{cases}
P(y_0,y_1,y_2,y_3)=0 \\
y_0+\alpha_1y_1+\alpha_2y_2+\alpha_3y_3=0 \\
\left[P_{x_0}(y):P_{x_1}(y):
P_{x_2}(y):P_{x_3}(y)\right]=
[1:\alpha_1:\alpha_2:\alpha_3],
\end{cases}
\end{equation}
where $P_{x_0}$ is the partial derivative of $P$ with respect to $x_0$, etc.
Since $P_{x_i}=4x_i^3$ we see that $y_0\neq 0$, by the third equation of (\ref{eq:2-2}).
Thus we may assume $y_0=1$ and we have
\begin{equation}
\label{eq:2-3}
y_1^3=\alpha_1,\ y_2^3=\alpha_2,\ y_3^3=\alpha_3.
\end{equation}
Under (\ref{eq:2-3}), the first and the second equations of (\ref{eq:2-2})
are equivalent. Therefore $\alpha \in X^{\vee}$ if and only if there exists
$(y_1,y_2,y_3)\in \mathbb{C}^3$ such that
$$\begin{cases}
y_1^3=\alpha_1,\ y_2^3=\alpha_2,\ y_3^3=\alpha_3 \\
1+\alpha_1y_1+\alpha_2y_2+\alpha_3y_3=0.
\end{cases}$$
Let $\beta_i$ be a complex number such that $\beta_i^3=\alpha_i$ for $i=1,2,$ and $3$.
Then, $\alpha \in X^{\vee}$ if and only if there exist $i_1,i_2,i_3\in \{ 0,1,2\}$ such that
$$1+\omega^{i_1}\beta_1^4+\omega^{i_2}\beta_2^4+\omega^{i_3}\beta_3^4=0,$$
namely $\beta=[1:\beta_1:\beta_2:\beta_3]$ satisfies the equation (\ref{eq:2-1}).
This completes the proof.
\end{proof} 

\section{Cutting $X^{\vee}$ by a line of special type}
Let $c_1$ and $c_2$ be complex numbers and $L=L(c_1,c_2)$ the line of $\mathbb{P}^3$ defined by
$$c_1^3\alpha_0-\alpha_1=c_2^3\alpha_0-\alpha_2=0.$$
We introduce a homogeneous coordinate system $[u:v]$ of $L$ by assigning
$[\alpha_0:\alpha_1:\alpha_2:\alpha_3]=[u:c_1^3u:c_2^3u:v]$ to $[u:v]$.

\begin{prop}
The defining equation of $L\cap X^{\vee}\subset L$ is given by
$$\prod_{0\le i_1,i_2 \le 2}\left((1+\omega^{i_1}c_1^4+\omega^{i_2}c_2^4)^3u^4
+v^4 \right)=0.$$
\label{prop:3-1}
\end{prop}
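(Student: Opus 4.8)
The plan is to restrict the defining equation (\ref{eq:2-1}) of $X^{\vee}$ to $L$ by means of the parametrization $[\alpha_0:\alpha_1:\alpha_2:\alpha_3]=[u:c_1^3u:c_2^3u:v]$, and then to carry out the product over the single index $i_3$ by an elementary factorization. By the remark following Proposition \ref{prop:2-1}, the left hand side of (\ref{eq:2-1}) descends to an honest polynomial in the $\alpha_i$; consequently its restriction to $L$ is a well-defined polynomial in $u$ and $v$, and I may compute it using any convenient branch of the formal cube roots.

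Concretely, I would introduce $\beta_0$ and $\beta_3$ with $\beta_0^3=u$ and $\beta_3^3=v$, and set $\beta_1=c_1\beta_0$, $\beta_2=c_2\beta_0$; the latter choices are legitimate since $\beta_1^3=c_1^3u=\alpha_1$ and $\beta_2^3=c_2^3u=\alpha_2$ along $L$. With this substitution each factor of (\ref{eq:2-1}) becomes
\[
\beta_0^4+\omega^{i_1}\beta_1^4+\omega^{i_2}\beta_2^4+\omega^{i_3}\beta_3^4
=(1+\omega^{i_1}c_1^4+\omega^{i_2}c_2^4)\,\beta_0^4+\omega^{i_3}\beta_3^4,
\]
so that the dependence on $i_3$ is separated from that on $i_1,i_2$.

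The key step is the factorization $\prod_{i_3=0}^{2}(X+\omega^{i_3}Y)=X^3+Y^3$, which holds because $\omega$ is a primitive cube root of unity. Applying it for each fixed pair $(i_1,i_2)$, with $X=(1+\omega^{i_1}c_1^4+\omega^{i_2}c_2^4)\beta_0^4$ and $Y=\beta_3^4$, collapses the triple product over $i_3$ into
\[
(1+\omega^{i_1}c_1^4+\omega^{i_2}c_2^4)^3\,(\beta_0^4)^3+(\beta_3^4)^3.
\]
Using $(\beta_0^4)^3=(\beta_0^3)^4=u^4$ and $(\beta_3^4)^3=v^4$, and then taking the remaining product over $0\le i_1,i_2\le 2$, yields exactly the asserted equation. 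Note that the outcome depends on the chosen cube roots only through $u^4$ and $v^4$, which reconfirms that the branch is immaterial.

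Since this is a direct evaluation, I anticipate no genuine difficulty; the only points meriting care are the justification that the branch of the cube roots plays no role (handled by the descent recorded after Proposition \ref{prop:2-1}, together with the final dependence only on $u^4,v^4$) and the bookkeeping ensuring that collapsing the $i_3$-product reassembles precisely the nine factors indexed by $0\le i_1,i_2\le 2$.
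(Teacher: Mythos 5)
Your proposal is correct and follows essentially the same route as the paper: substitute $\beta_1=c_1\beta_0$, $\beta_2=c_2\beta_0$ into each factor of (\ref{eq:2-1}) and collapse the product over $i_3$ via $\prod_{i_3=0}^{2}(X+\omega^{i_3}Y)=X^3+Y^3$, yielding $(1+\omega^{i_1}c_1^4+\omega^{i_2}c_2^4)^3u^4+v^4$ for each pair $(i_1,i_2)$. Your added remark that the choice of cube-root branch is immaterial (by the descent noted after Proposition \ref{prop:2-1}) is a small refinement of the same argument, not a different one.
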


\begin{proof}
Let $\beta_i,0\le i\le 3$ be the formal elements as in
Proposition \ref{prop:2-1} and suppose $c_1\beta_0-\beta_1=c_2\beta_0-\beta_2=0$. Then
$\beta_0^4+\omega^{i_1}\beta_1^4+\omega^{i_2}\beta_2^4+\omega^{i_3}\beta_3^4$ is equal to
$$\beta_0^4+\omega^{i_1}(c_1\beta_0)^4+\omega^{i_2}(c_2\beta_0)^4+\omega^{i_3}\beta_3^4
=(1+\omega^{i_1}c_1^4+\omega^{i_2}c_2^4)\beta_0^4+\omega^{i_3}\beta_3^4,$$
and
\begin{eqnarray*}
\prod_{0\le i_3 \le 2}(1+\omega^{i_1}c_1^4+\omega^{i_2}c_2^4)\beta_0^4+\omega^{i_3}\beta_3^4
&=& ((1+\omega^{i_1}c_1^4+\omega^{i_2}c_2^4)\beta_0^4)^3+(\beta_3^4)^3 \\
&=& (1+\omega^{i_1}c_1^4+\omega^{i_2}c_2^4)^3\alpha_0^4+\alpha_3^4.
\end{eqnarray*}
Note that $\omega$ is a primitive third root of unity.
Combining this computation with Proposition \ref{prop:2-1}, we have the result.
\end{proof}
Suppose $c_1$ and $c_2$ are chosen so that
\begin{enumerate}
\item
for any pair $(i_1,i_2)$, we have $1+\omega^{i_1}c_1^4+\omega^{i_2}c_2^4\neq 0$,
\item
for any two distinct pairs $i=(i_1,i_2)$ and $j=(j_1,j_2)$, the roots of
$f_i(v)=v^4+(1+\omega^{i_1}c_1^4+\omega^{i_2}c_2^4)^3$ and those of
$f_j(v)=v^4+(1+\omega^{j_1}c_1^4+\omega^{j_2}c_2^4)^3$ are all different.
\end{enumerate}
Then by Proposition \ref{prop:3-1}, $L\cap X^{\vee}$ consists of ${\rm deg}(X^{\vee})=36$ points
therefore $L$ meets $X^{\vee}$ transversely. Moreover, $L\cap X^{\vee}$ is contained in $L\setminus \{ [1:0],[0:1]\}$.
For simplicity we write $\mathbb{C}_L$ instead of $L\setminus \{ [0:1]\}$, and we identify $\mathbb{C}_L$
with $\mathbb{C}$ by $v\mapsto [1:v], v\in \mathbb{C}$. Choose $0\in \mathbb{C}_L$ as a base point of
$L\setminus (L\cap X^{\vee})$. By the Zariski theorem of Lefschetz type \cite{H}, (for our purpose,
a weaker statement in \cite{L}, (7.4.1) is sufficient) the natural homomorphism
\begin{equation}
\label{eq:3-1}
\pi_1(L\setminus (L\cap X^{\vee}),0)\rightarrow \pi_1(\mathbb{P}^3\setminus X^{\vee},b_0)
\end{equation}
induced by the inclusion is surjective (we denote by $b_0$ the image of
$0\in \mathbb{C}_L$ by the inclusion).
From now on, we assume that $c_1$ and $c_2$ satisfy the two conditions above
and will focus on the surface bundle
$$\pi^{\prime}\colon \mathcal{F}^{\prime}\rightarrow L\setminus (L\cap X^{\vee})$$
where $\mathcal{F}^{\prime}=\pi^{-1}(L\setminus (L\cap X^{\vee}))$ and
$\pi^{\prime}=\pi |_{\mathcal{F}^{\prime}}$. The associated topological monodromy
$$\rho^{\prime}\colon \pi_1(L\setminus (L\cap X^{\vee}),0)
\rightarrow \Gamma_3$$ is the composition of (\ref{eq:3-1}) and (\ref{eq:1-1}).

\section{A lemma on the hyperplane section by $v\in \mathbb{C}_L$}
Let $v\in \mathbb{C}_L$. We denote by $X_v$ the hyperplane section $v\cap X$, whose defining equation is
$$\begin{cases}
x_0^4+x_1^4+x_2^4+x_3^4=0 \\
x_0+c_1^3x_1+c_2^3x_2+vx_3=0.
\end{cases}$$
Eliminating the indeterminate $x_0$, we obtain
$$(c_1^3x_1+c_2^3x_2+vx_3)^4+x_1^4+x_2^4+x_3^4=0.$$
Let $E_v=E_v(x_1,x_2,x_3)$ be the left hand side of this equation.
Then by regarding $[x_1:x_2:x_3]$ as a homogeneous coordinate system of $\mathbb{P}_2$,
$X_v$ is identified with the plane curve determined by $E_v$. Under this identification,
consider the projection
$$p_v\colon X_v\rightarrow \mathbb{P}_1, [x_1:x_2:x_3]\mapsto [x_1:x_3].$$

\begin{lem}
\label{lem:4-1}
If $|c_1|^4+|c_2|^4<1$, the following holds: for any $v \in \mathbb{C}_L$,
\begin{enumerate}
\item the plane curve $X_v$ has no singularities on the line $x_3=0$, and
\item the projection $p_v$ does not branch over $[1:0]\in \mathbb{P}^1$.
\end{enumerate}
\end{lem}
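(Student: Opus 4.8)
The plan is to observe that both assertions concern only the behaviour of $X_v$ along the line $x_3=0$, and that the crucial simplification is that the restriction $E_v(x_1,x_2,0)=(c_1^3x_1+c_2^3x_2)^4+x_1^4+x_2^4$ does not involve $v$ at all. Writing $Q(x_1,x_2)$ for this binary quartic form and $\ell_0=c_1^3x_1+c_2^3x_2$, I would reduce the whole lemma, for every $v$ simultaneously, to the single claim that $Q$ has four distinct roots in $\mathbb{P}_1$ whenever $|c_1|^4+|c_2|^4<1$.

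For (1): a singular point of $X_v$ on $x_3=0$ is a point $[x_1:x_2:0]$ at which all three partials of $E_v$ vanish. Since $\partial_{x_1}E_v=4(c_1^3\ell^3+x_1^3)$ and $\partial_{x_2}E_v=4(c_2^3\ell^3+x_2^3)$ with $\ell|_{x_3=0}=\ell_0$, the first two conditions read $\partial_{x_1}Q=\partial_{x_2}Q=0$, which (using Euler's relation, so that $Q$ then vanishes automatically) is exactly the statement that $Q$ has a multiple root at $[x_1:x_2]$. As these two equations are independent of $v$, the absence of multiple roots of $Q$ rules out such singular points for every $v$, giving (1).

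For (2): I would first note that $[0:1:0]$, the centre of the projection, satisfies $E_v(0,1,0)=c_2^{12}+1\neq 0$ because $|c_2|^4<1$; hence $[0:1:0]\notin X_v$ and $p_v$ is a genuine morphism of degree $\deg X_v=4$. The fibre $p_v^{-1}([1:0])$ is $X_v\cap\{x_3=0\}=\{Q=0\}$, and the same inequality shows $[0:1]$ is not a root of $Q$, so all four roots give honest points of this fibre (each with $x_1\neq0$, and each a smooth point of $X_v$ by (1)). If $Q$ has four distinct roots, the degree-$4$ map $p_v$ then has four distinct preimages over $[1:0]$ and is therefore unramified there, which is precisely (2).

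It thus remains to prove the claim, and this last estimate is the only nontrivial step. Suppose $Q$ has a multiple root $[x_1:x_2]$. Then $x_1^3=-c_1^3\ell_0^3$ and $x_2^3=-c_2^3\ell_0^3$, so passing to absolute values $|x_1|=|c_1|\,|\ell_0|$ and $|x_2|=|c_2|\,|\ell_0|$. Substituting into $\ell_0=c_1^3x_1+c_2^3x_2$ and applying the triangle inequality yields $|\ell_0|\le(|c_1|^4+|c_2|^4)|\ell_0|$. If $\ell_0\neq0$ this forces $|c_1|^4+|c_2|^4\ge1$, contradicting the hypothesis; and $\ell_0=0$ forces $x_1=x_2=0$, impossible in $\mathbb{P}_1$. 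Hence $Q$ has no multiple root, and the whole lemma follows. The main obstacle is exactly this inequality (it is where the hypothesis $|c_1|^4+|c_2|^4<1$ is used); everything else is bookkeeping about the projection and its centre.
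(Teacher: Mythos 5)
Your proof is correct, and it is organized differently from the paper's. The paper treats the two assertions as separate elimination problems: for (1) it first splits into the cases $v\neq 0$ (where $E_{x_3}=0$ forces $c_1^3x_1+c_2^3x_2=0$ and hence $x_1=x_2=0$) and $v=0$ (where it solves $E_{x_1}=E_{x_2}=0$ explicitly by extracting cube roots, arriving at $(c_1^4\omega^j+c_2^4)^3=-1$); for (2) it runs a parallel but separate computation with $E=E_{x_2}=0$ at $(x_1,x_3)=(1,0)$, arriving at $(c_1^4+c_2^4\omega^j)^3=-1$; both times the hypothesis $|c_1|^4+|c_2|^4<1$ is invoked to rule out the final identity. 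You instead notice that everything happening on $x_3=0$ is governed by the $v$-independent binary quartic $Q=(c_1^3x_1+c_2^3x_2)^4+x_1^4+x_2^4$, reduce both assertions to the single statement that $Q$ has no multiple root, and prove that once by passing to absolute values and using the triangle inequality $|\ell_0|\le(|c_1|^4+|c_2|^4)|\ell_0|$ -- which is exactly the inequality hiding inside the paper's twice-repeated ``$(\cdots)^3=-1$ is impossible'' step. Your route buys uniformity in $v$ without the case split, avoids the explicit cube-root bookkeeping, and makes transparent where the hypothesis enters; the paper's route is more elementary in that it never needs to invoke the degree-count argument for unramifiedness (its criterion $E=E_{x_2}=0$ having no solution is directly the implicit-function-theorem statement). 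The only point you gloss over is that your degree-4 count for $p_v$ presupposes $X_v$ reduced; this is harmless here (a repeated factor of $E_v$ would already force a multiple root of $Q$ on $x_3=0$), and your argument could equally well conclude via the observation that a simple root $b$ of $Q(1,x_2)$ means $E_{x_2}(1,b,0)\neq 0$, giving smoothness and local injectivity of $x_3$ at once.
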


\begin{proof}
For simplicity, we write $E$ instead of $E_v$.
Suppose $E=E_{x_1}=E_{x_2}=E_{x_3}=0$ has a solution $[x_1:x_2:0]$ for some $v \in \mathbb{C}_L$.
If $v \neq 0$, we have $c_1^3x_1+c_2^3x_2=0$ since $E_{x_3}=0$.
Substituting this into $E_{x_1}=E_{x_2}=0$ we have $x_1=x_2=0$,
a contradiction. Thus it suffices to consider the case when $v=0$. Suppose $x_2=1$. Then we have
\begin{equation}
\label{eq:4-1}
\begin{cases}
E_{x_1}=4c_1^3(c_1^3x_1+c_2^3)^3+4x_1^3=0 \\
E_{x_2}=4c_2^3(c_1^3x_1+c_2^3)^3+4=0.
\end{cases}
\end{equation}
By the second equation of (\ref{eq:4-1}), we have
\begin{equation}
\label{eq:4-2}
(c_1^3x_1+c_2^3)^3=-c_2^{-3}.
\end{equation}
Substituting this into the first equation of (\ref{eq:4-1}), we have
$x_1^3=(c_1/c_2)^3$ therefore we can write $x_1=\omega^j c_1/c_2$ for some $j$, $0\le j\le 2$.
Substituting this into (\ref{eq:4-2}) we have a necessary condition
$(c_1^4\omega^j+c_2^4)^3=-1$. But this is impossible by our assumption
$|c_1|^4+|c_2|^4 < 1$. If we assume $x_1=1$ a similar argument
leads to a contradiction. This establishes the first part.

To show the second part, it suffices to show the following:
for $(x_1,x_3)=(1,0)$, the equation $E=E_{x_2}=0$
does not have any solution in $x_2$. The argument is similar to
the first part. Suppose $x_2\in \mathbb{C}$ satisfies
\begin{equation}
\label{eq:4-5}
\begin{cases}
E=(c_1^3+c_2^3x_2)^4+1+x_2^4=0 \\
E_{x_2}=4c_2^3(c_1^3+c_2^3x_2)^3+4x_2^3=0.
\end{cases}
\end{equation}
By the second equation of (\ref{eq:4-5}), we have
\begin{equation}
\label{eq:4-6}
(c_1^3+c_2^3x_2)^3=-\frac{x_2^3}{c_2^3}.
\end{equation}
Substituting this into the first equation of (\ref{eq:4-5}),
we see that $x_2=\omega^j c_2/c_1$ for some $j$, $0\le j\le 2$.
Substituting this into (\ref{eq:4-6}) we have $(c_1^4+c_2^4\omega^j)^3=-1$, a contradiction.
\end{proof}

\section{A special choice of $c_1$ and $c_2$}
Henceforth, let $c_1=7/8$ and $c_2=3/4$. For this choice, the conditions for $c_1$ and $c_2$
given in section 3 and the assumption of Lemma \ref{lem:4-1} are satisfied.

To study $\rho^{\prime}$ (see section 3) we also consider
$\bar{\mathcal{F}}:=\{ (x,H)\in \mathbb{P}_3\times L;\ x\in H\cap X\}$ and the second projection
$\bar{\pi}\colon \bar{\mathcal{F}} \rightarrow L$. By the transversality of $L$ and $X^{\vee}$, it follows that
$\bar{\mathcal{F}}$ is non-singular and $\bar{\pi}\colon \bar{\mathcal{F}} \rightarrow L$ is a Lefschetz fibration 
(see section 1). The set of critical values of $\bar{\pi}$ is $L\cap X^{\vee}=\{ v_1,\ldots,v_{36} \}$.
For each $v_i$, there is a unique critical point $\tilde{v}_i$ in $\bar{\pi}^{-1}(v_i)$ and for a suitable choice
of local holomorphic coordinates, the projection $\bar{\pi}$ looks like $(z_1,z_2)\mapsto z_1^2+z_2^2$
near $\tilde{v}_i$.
In this local model, the singular fiber $\bar{\pi}^{-1}(v_i)$ looks like $\Sigma_0=\{ z_1^2+z_2^2=0\}$,
which is obtained from the smooth fibers $\Sigma_{\varepsilon}=\{ z_1^2+z_2^2=\varepsilon \}$, $\varepsilon >0$
by collapsing the simple closed curves $C_{\varepsilon}=\{ (x_1,x_2)\in \mathbb{R}^2; x_1^2+x_2^2=\varepsilon \}$.
The curve $C_{\varepsilon}$ is called \textit{the vanishing cycle}.
By the Picard-Lefschetz formula (\cite{GS}, p.295), the local monodromy around each $v_i$ is
the right hand Dehn twist along the corresponding vanishing cycle.

Recall that the defining equation of $X_v=v \cap X$ is
$$(c_1^3x_1+c_2^3x_2+vx_3)^4+x_1^4+x_2^4+x_3^4=0.$$
By Lemma \ref{lem:4-1}, $p_v$ is unramified over $[1:0]\in \mathbb{P}_1$. Thus we focus on $p_v$
restricted to $\mathbb{P}_1\setminus \{ [1:0]\}$, which is identified with $\mathbb{C}$ by
$x_1\mapsto [x_1:1],x_1 \in \mathbb{C}$. Let
$$F_{x_1}^v(x_2):=(c_1^3x_1+c_2^3x_2+v)^4+x_1^4+x_2^4+1$$
and $G^v(x_1)$ the discriminant of $F_{x_1}^v$ regarded as a polynomial in $x_2$ and $Q(v)$
the discriminant of $G^v(x_1)$ regarded as a polynomial in $x_1$. $G^v(x_1)$ is a polynomial of
degree 12 in $x_1$. By definition $v\in \mathbb{C}_L$ is a root of $Q$ if and only if there is a root of
$G^v$ with multiplicity $\ge 2$. As we will see in section 7, $G^{v_i}$ has this property hence
$Q(v_i)=0$ for $i=1,\ldots,36$. Therefore if $v$ is not a root of $Q$ the curve $X_v$ is
non-singular and all the roots of $G^v$, which correspond to the critical values of $p_v$, are simple.
By the Riemann-Hurwitz formula we see that the total branching order of each critical value of $p_v$ is $1$.
This means that over each critical value there is an exactly one critical point of $p_v$, near which
$p_v$ looks like $z\mapsto z^2$ for a suitable choice of local coordinates.

\section{Description of the reference fiber}
In this section, we describe the reference fiber $X_0={\pi^{\prime}}^{-1}(0)$ as
a 4-fold branched covering $p_0\colon X_0\rightarrow \mathbb{P}_1$.
As in the last section we focus on $p_0$ restricted to $\mathbb{P}_1\setminus \{ [1:0]\} \cong \mathbb{C}$.

The roots of $G^0(x_1)$ are numerically computed and we denote them by
$a_1,\ldots,a_{12}$ as shown in the following schematic figure:

\begin{center}
\unitlength 0.1in
\begin{picture}( 39.9000, 32.0000)(  0.1000,-34.0000)
%
\special{pn 8}%
\special{sh 1}%
\special{ar 2400 1800 10 10 0  6.28318530717959E+0000}%
\special{sh 1}%
\special{ar 2400 1800 10 10 0  6.28318530717959E+0000}%
\put(24.9000,-20.3000){\makebox(0,0)[lb]{$0$}}%
\put(38.4000,-20.3000){\makebox(0,0)[lb]{Re}}%
\put(24.7000,-4.4000){\makebox(0,0)[lb]{Im}}%
\put(36.0000,-14.0000){\makebox(0,0)[lb]{$a_1$}}%
\put(34.0000,-10.0000){\makebox(0,0)[lb]{$a_2$}}%
\put(30.0000,-8.0000){\makebox(0,0)[lb]{$a_3$}}%
\put(18.0000,-8.0000){\makebox(0,0)[lb]{$a_4$}}%
\put(14.0000,-10.0000){\makebox(0,0)[lb]{$a_5$}}%
\put(12.0000,-14.0000){\makebox(0,0)[lb]{$a_6$}}%
\put(12.0000,-26.0000){\makebox(0,0)[lb]{$a_7$}}%
\put(14.0000,-30.0000){\makebox(0,0)[lb]{$a_8$}}%
\put(18.0000,-32.0000){\makebox(0,0)[lb]{$a_9$}}%
\put(30.0000,-32.0000){\makebox(0,0)[lb]{$a_{10}$}}%
\put(34.0000,-30.0000){\makebox(0,0)[lb]{$a_{11}$}}%
\put(36.0000,-26.0000){\makebox(0,0)[lb]{$a_{12}$}}%
%
\special{pn 8}%
\special{sh 1}%
\special{ar 1800 3000 10 10 0  6.28318530717959E+0000}%
\special{sh 1}%
\special{ar 1800 3000 10 10 0  6.28318530717959E+0000}%
%
\special{pn 8}%
\special{sh 1}%
\special{ar 3000 3000 10 10 0  6.28318530717959E+0000}%
\special{sh 1}%
\special{ar 3000 3000 10 10 0  6.28318530717959E+0000}%
%
\special{pn 8}%
\special{sh 1}%
\special{ar 3600 2400 10 10 0  6.28318530717959E+0000}%
\special{sh 1}%
\special{ar 3600 2400 10 10 0  6.28318530717959E+0000}%
%
\special{pn 8}%
\special{sh 1}%
\special{ar 3600 1200 10 10 0  6.28318530717959E+0000}%
\special{sh 1}%
\special{ar 3600 1200 10 10 0  6.28318530717959E+0000}%
%
\special{pn 8}%
\special{sh 1}%
\special{ar 3000 600 10 10 0  6.28318530717959E+0000}%
\special{sh 1}%
\special{ar 3000 600 10 10 0  6.28318530717959E+0000}%
%
\special{pn 8}%
\special{sh 1}%
\special{ar 1800 600 10 10 0  6.28318530717959E+0000}%
\special{sh 1}%
\special{ar 1800 600 10 10 0  6.28318530717959E+0000}%
%
\special{pn 8}%
\special{sh 1}%
\special{ar 1200 1200 10 10 0  6.28318530717959E+0000}%
\special{sh 1}%
\special{ar 1200 1200 10 10 0  6.28318530717959E+0000}%
%
\special{pn 8}%
\special{sh 1}%
\special{ar 1200 2400 10 10 0  6.28318530717959E+0000}%
\special{sh 1}%
\special{ar 1200 2400 10 10 0  6.28318530717959E+0000}%
%
\special{pn 8}%
\special{sh 1}%
\special{ar 1400 2800 10 10 0  6.28318530717959E+0000}%
\special{sh 1}%
\special{ar 1400 2800 10 10 0  6.28318530717959E+0000}%
%
\special{pn 8}%
\special{sh 1}%
\special{ar 3400 2800 10 10 0  6.28318530717959E+0000}%
\special{sh 1}%
\special{ar 3400 2800 10 10 0  6.28318530717959E+0000}%
%
\special{pn 8}%
\special{sh 1}%
\special{ar 3400 800 10 10 0  6.28318530717959E+0000}%
\special{sh 1}%
\special{ar 3400 800 10 10 0  6.28318530717959E+0000}%
%
\special{pn 8}%
\special{sh 1}%
\special{ar 1400 800 10 10 0  6.28318530717959E+0000}%
\special{sh 1}%
\special{ar 1400 800 10 10 0  6.28318530717959E+0000}%
%
\special{pn 8}%
\special{pa 800 1800}%
\special{pa 4000 1800}%
\special{fp}%
\special{sh 1}%
\special{pa 4000 1800}%
\special{pa 3934 1780}%
\special{pa 3948 1800}%
\special{pa 3934 1820}%
\special{pa 4000 1800}%
\special{fp}%
%
\special{pn 8}%
\special{pa 2400 3400}%
\special{pa 2400 200}%
\special{fp}%
\special{sh 1}%
\special{pa 2400 200}%
\special{pa 2380 268}%
\special{pa 2400 254}%
\special{pa 2420 268}%
\special{pa 2400 200}%
\special{fp}%
\put(0.1000,-18.6000){\makebox(0,0)[lb]{Figure 6.1}}%
\end{picture}%
\end{center}

Here, $a_1\approx 0.709187+0.642143\sqrt{-1}$,
$a_2\approx 0.692307+0.692307\sqrt{-1}$,
$a_3\approx 0.642143+0.709187\sqrt{-1}$ and $a_{i+3}=\sqrt{-1}a_i$ for $1\le i \le 9$.

For $x_1\in \mathbb{C}$, the points in the fiber $p_0^{-1}(x_1)$
correspond to the roots of $F_{x_1}^0$ by $[x_1:x_2:1]\mapsto x_2$.
Now we choose $0$ as a base point of $\mathbb{C}\setminus \{a_i \}_i$.
The fiber $p_0^{-1}(0)$ corresponds to the roots of
$$F_0^0(x_2)=(c_2^{12}+1)x_2^4+1,$$ i.e., $\{s_k\}_{k=1}^4$ where
$s_k=(1+c_2^{12})^{-\frac{1}{4}}\exp \left((2k-1)\pi \sqrt{-1}/4\right)$.

We will investigate the monodromy
$$\chi\colon \pi_1(\mathbb{C}\setminus \{a_i\}_i,0)\rightarrow \mathfrak{S}_4$$
of the unramified 4-covering $p_0^{-1}(\mathbb{C}\setminus \{a_i\}_i)\rightarrow
\mathbb{C}\setminus \{a_i\}_i$. Here, $\mathfrak{S}_4$ is the symmetric group on
the four letters $s_1,s_2,s_3$, and $s_4$.

For each $j=1,\ldots,12$, let $m_j$ be the straight line segment from $0$ to $a_j$
and $\ell_j$ be a based loop in $\mathbb{C}\setminus \{a_i\}_i$
going from $0$ to a point nearby $a_j$ along $m_j$, then going
once around $a_j$ by counter-clockwise manner and then coming back
to $0$ along $m_j$, as shown in the following figure.

\begin{center}
\unitlength 0.1in
\begin{picture}( 32.7000, 13.2000)(  4.3000,-28.0000)
%
\special{pn 8}%
\special{sh 1}%
\special{ar 1400 1800 10 10 0  6.28318530717959E+0000}%
\special{sh 1}%
\special{ar 1390 1800 10 10 0  6.28318530717959E+0000}%
%
\special{pn 8}%
\special{ar 3390 1800 200 200  0.0000000 6.2831853}%
%
\special{pn 8}%
\special{sh 1}%
\special{ar 3390 1800 10 10 0  6.28318530717959E+0000}%
\special{sh 1}%
\special{ar 3400 1800 10 10 0  6.28318530717959E+0000}%
%
\special{pn 8}%
\special{pa 1400 1800}%
\special{pa 1400 1800}%
\special{fp}%
\special{pa 3200 1800}%
\special{pa 3200 1800}%
\special{fp}%
%
\special{pn 8}%
\special{pa 3200 1800}%
\special{pa 3200 1800}%
\special{fp}%
\special{pa 1400 1800}%
\special{pa 3200 1800}%
\special{fp}%
\put(14.0000,-22.0000){\makebox(0,0)[lb]{$0$}}%
\put(32.0000,-22.0000){\makebox(0,0)[lb]{$a_j$}}%
%
\special{pn 8}%
\special{pa 3600 2000}%
\special{pa 3700 1840}%
\special{fp}%
\special{sh 1}%
\special{pa 3700 1840}%
\special{pa 3648 1886}%
\special{pa 3672 1886}%
\special{pa 3682 1908}%
\special{pa 3700 1840}%
\special{fp}%
\put(22.0000,-16.5000){\makebox(0,0)[lb]{$\ell_j$}}%
%
\special{pn 8}%
\special{sh 1}%
\special{ar 1400 2800 10 10 0  6.28318530717959E+0000}%
\special{sh 1}%
\special{ar 1400 2800 10 10 0  6.28318530717959E+0000}%
%
\special{pn 8}%
\special{pa 1400 2800}%
\special{pa 3400 2800}%
\special{fp}%
\put(14.0000,-27.1000){\makebox(0,0)[lb]{$0$}}%
\put(32.0000,-27.1000){\makebox(0,0)[lb]{$a_j$}}%
\put(22.0000,-26.5000){\makebox(0,0)[lb]{$m_j$}}%
%
\special{pn 8}%
\special{sh 1}%
\special{ar 3400 2800 10 10 0  6.28318530717959E+0000}%
\special{sh 1}%
\special{ar 3400 2800 10 10 0  6.28318530717959E+0000}%
\put(4.3000,-23.6000){\makebox(0,0)[lb]{Figure 6.2}}%
\end{picture}%
\end{center}

By numerical analysis using a computer, we see that $\chi(\ell_j)$ is given by the following table:

\begin{center}
\begin{tabular}{c|lcc|l}
$j$ & $\chi(\ell_j)$ & & $j$ & $\chi(\ell_j)$ \\
\hline
1 & (12) & & 7 & (34) \\
2 & (13) & & 8 & (13) \\
3 & (14) & & 9 & (23) \\
4 & (23) & & 10 & (14) \\
5 & (24) & & 11 & (24) \\
6 & (12) & & 12 & (34) \\
\end{tabular}
\end{center}

For example, $\chi(\ell_1)=(12)$ means $\chi(\ell_1)$ is the transposition of $s_1$ and $s_2$, etc.
Let $S_k=[0:s_k:1]$ and $\tilde{a}_j$ the unique critical point of $p_0$ over $a_j$,
and $\tilde{m}_j$ the connected component of $p_0^{-1}(m_j)$ containing
$\tilde{a}_j$ as an interior point.
Then $p_0^{-1}(0)=\{ S_k\}_{k=1}^4$ and we can draw the picture
of $S_k$, $\tilde{a}_j$, and $\tilde{m}_j$ on $X_0$
by using the table above, which determines the topological type
of the branched covering $p_0\colon X_0\rightarrow \mathbb{P}_1$. See the figure below.
\begin{center}
\input{figure6-3.tex}
\end{center}
For example, $\tilde{m}_1$ is the unique path from $S_1$ through $\tilde{a}_1$ to $S_2$,
corresponding to the data $\chi(\ell_1)=(12)$. In section 7 this figure
will be a key to find the vanishing cycles.

\section{Finding the vanishing cycles}
In this section we give a complete description of
$$\rho^{\prime}\colon \pi_1(L\setminus (L\cap X^{\vee}),0)\rightarrow \Gamma_3$$
and finish the proof of Theorem \ref{thm:1-1}.
Our task is to determine the position of all the vanishing cycles in $X_0$.
We will achieve this by investigating the motions of the critical values of $p_v$
along a suitably chosen path from $0$ to each point of
$L\cap X^{\vee}=\{ v_1\ldots,v_{36}\}$.

Now we arrange indices of $v_i$'s and let
$\mu_i\colon [0,1]\rightarrow \mathbb{C}_L$ be a simple path from $0$ to
$v_i$, satisfying $Q(\mu_i(t))\neq 0$ for $t\in [0,1)$, as shown in Figure 7.1.

\begin{center}
\input{figure7-1.tex}
\end{center}

Approximate values of $v_i$'s are:
$v_1\approx 0.600851+0.315483\sqrt{-1}$,
$v_2\approx 0.963952+0.064039\sqrt{-1}$,
$v_3\approx 0.999689+0.470655\sqrt{-1}$,
$v_4\approx 1.059535+0.794167\sqrt{-1}$,
$v_5\approx 1.145495+1.145495\sqrt{-1}$,
$v_i={\rm Im}(v_{10-i})+{\rm Re}(v_{10-i})$
for $6\le i\le9$, and $v_{i+9}=\sqrt{-1}v_i$ for $1\le i\le 27$.
Each $\mu_i$ consists of 4 straight line segments, as shown in the
figure. Here, $\zeta_i$ is a root of $(1+\omega^i c_1^4)^3+\zeta_i^4$ such
that ${\rm Re}(\zeta_i)>0, {\rm Im}(\zeta_i)>0$ for $i=1,2,3$
and $\mu_{i+9}=\sqrt{-1}\mu_i$ for $1\le i\le 27$.

Let $\lambda_i$ be a based loop in $\mathbb{C}_L \setminus (L\cap X^{\vee})$
going from $0$ to a point nearby $v_i$ along $\mu_i$, then going
once around $v_i$ by counter-clockwise manner and then coming back to $0$ along $\mu_i$. 
Then $\{\lambda_1,\ldots,\lambda_{36} \}$ is a standard generating system for
$\pi_1(L\setminus (L\cap X^{\vee}),0)$ in the sense of section 1.

For a while we fix $i,1\le i\le 36$. For each $t\in [0,1)$ the roots of $G^{\mu_i(t)}(x_1)$
are all simple, therefore we can choose complex valued continuous functions
$a_1(t),\ldots,a_{12}(t)$ such that $G^{\mu_i(t)}(a_j(t))=0$ and $a_j(0)=a_j$
for $j=1,\ldots,12$. We have $a_j(t)\neq a_k(t)$ for $t\in [0,1)$ and $(j,k)$ with $j\neq k$.

By continuity, $a_j(t)$ is uniquely extended to a continuous function on the unit interval $[0,1]$.
We would like to study what happens when $t$ approaches $1$.
By numerical analysis using a computer, we can investigate the motions of $a_j(t)$, $1\le j\le 12$.

\noindent \textbf{Observation 1.}
There exist two indices $\delta=\delta(i)$
and $\varepsilon=\varepsilon(i),1\le \delta<\varepsilon\le 12$ such that
$a_{\delta}(1)=a_{\varepsilon}(1)$ and $a_j(1) \neq a_k(1)$ for any pair
$(j,k)$ with $j<k$ other than $(\delta,\varepsilon)$, see the table below.
In particular, the number of roots of $G^{v_i}$ is 11.

\begin{center}
\begin{tabular}{l|ccl|ccl|ccl|c}
$i$ & $(\delta(i),\varepsilon(i))$ & & $i$ &$(\delta(i),\varepsilon(i))$
& & $i$ & $(\delta(i),\varepsilon(i))$ & & $i$ & $(\delta(i),\varepsilon(i))$ \\
\hline
1 & (3,6) & & 10 & (6,9) & & 19 & (9,12) & & 28 & (3,12) \\
2 & (1,4) & & 11 & (4,7) & & 20 & (7,10) & & 29 & (1,10) \\
3 & (2,5) & & 12 & (5,8) & & 21 & (8,11) & & 30 & (2,11) \\
4 & (1,7) & & 13 & (4,10) & & 22 & (1,7) & & 31 & (4,10) \\
5 & (2,8) & & 14 & (5,11) & & 23 & (2,8) & & 32 & (5,11) \\
6 & (3,9) & & 15 & (6,12) & & 24 & (3,9) & & 33 & (6,12) \\
7 & (2,11) & & 16 & (2,5) & & 25 & (5,8) & & 34 & (8,11) \\
8 & (3,12) & & 17 & (3,6) & & 26 & (6,9) & & 35 & (9,12) \\
9 & (1,10) & & 18 & (1,4) & & 27 & (4,7) & & 36 & (7,10) \\
\end{tabular}
\end{center}

\noindent \textbf{Observation 2.}
For any root $a_j(1)$ of $G^{v_i}$,
the number of roots of $F_{a_j(1)}^{v_i}$ is 3.

Let $\{{\gamma_i}^t \colon [0,1]\rightarrow \mathbb{C} \}_{0\le t\le 1}$ be
a continuous family of paths constructed by the following way.
First choose a real number $t_0<1$
sufficiently near $1$, and for $t\in [t_0,1]$, let ${\gamma_i}^t$ be
the straight path joining $a_{\delta}(t)$ and $a_{\varepsilon}(t)$.
Next extending the motions of $a_i(t)$'s for $t\in [0,t_0]$, we have an ambient isotopy
$\tau\colon \mathbb{C}\times [0,t_0]\rightarrow \mathbb{C}$ of $\mathbb{C}$
such that $\tau(x_1,t_0)=x_1$ and $\tau(a_i(t_0),t)=a_i(t)$, $1\le i\le 12$.
Finally we set ${\gamma_i}^t(s)=\tau({\gamma_i}^{t_0}(s),t)$ for $t\in [0,t_0]$.
Note that we may assume that ${\gamma_{i+9}}^t=\sqrt{-1}{\gamma_i}^t$.
This follows from the fact that $x_1\in \mathbb{C}$ is a root of $G^v$ if and
only if $\sqrt{-1}x_1$ is a root of $G^{\sqrt{-1}v}$. Then we have

\noindent \textbf{Observation 3.} ${\gamma_i}^0$, $1\le i\le 9$ look like Figure 7.2.

By construction the family $\{{\gamma_i}^t\}_{0\le t\le 1}$ satisfies the following three conditions:
\begin{enumerate}
\item for each $t\in [0,1]$, ${\gamma_i}^t(0)=a_{\delta}(t)$ and ${\gamma_i}^t(1)=a_{\varepsilon}(t)$,
\item for each $t\neq 1$, ${\gamma_i}^t$ is a simple path not meeting
$\{ a_j(t)\}_{j\neq \delta, \varepsilon}$,
\item ${\gamma_i}^1(s)=a_{\delta}(1)=a_{\varepsilon}(1)$, for $s\in [0,1]$.
\end{enumerate}

Let $C_i(t)$ be the connected component of
$p_{\mu_i(t)}^{-1}({\gamma_i}^t([0,1]))$ containing the critical points of $p_{\mu_i(t)}$
over $a_{\delta}(t)$ and $a_{\varepsilon}(t)$.
We can draw the picture of $C_i(0)$ on $X_0$ in Figure 6.3, then we see
that it is a simple closed curve in $X_0$, and isotopic to $C_i$
if we identify $X_0$ with the genus 3 surfaces in Figure 1.1 by an obvious manner.
The simplicity of the roots of $G^{\mu_i(t)}(x_1)$ for $t\in [0,1)$ implies that the
topological type of $p_{\mu_i(t)}$ is the same as $p_0$, therefore $C_i(t)$ is also a
simple closed curve in $X_{\mu_i(t)}$ for $t\in [0,1)$. On the other hand
$C_i(1)=p_{v_i}^{-1}(a_{\delta}(1))$
consists of a single point, which is a unique singular point of $X_{v_i}$.

\begin{center}
Figure 7.2
\input{figure7-2-1.tex}
\input{figure7-2-2.tex}
\input{figure7-2-3.tex}
\end{center}

Let $D$ be the unit closed disk and choose a continuous family
$\{{\iota_i}^t\colon D \rightarrow \mathbb{C}\}_{0\le t\le 1}$ of embeddings
of $D$ such that ${\iota_i}^t(D)$ contains
${\gamma_i}^t([0,1])$ and does not meet $\{ a_j(t)\}_{j\neq \delta, \varepsilon}$.
Let $A_i(t)$ be the connected component of $p_{\mu_i(t)}^{-1}({\iota_i}^t(D))$
containing $C_i(t)$.
For $t\in [0,1)$, $A_i(t)$ is homeomorphic to an annulus, and $A_i(1)$ is
homeomorphic to the space obtained from an annulus by collapsing
a non null-homologous simple closed curve in it.

Let $M_i$ be the quotient space of $X_0\times [0,1]$ obtained by identifying
all of $C_i(0)\times \{ 1\}$ to a single point.
Using $\{{\iota_i}^t\}_{0\le t\le 1}$, we have a diffeomorphism
$$\bigcup_{0\le t\le 1}\partial A_i(t)\cong \partial A_i(0)\times [0,1]$$
($\partial A_i(t)$ is the boundary of $A_i(t)$)
compatible with the natural projections onto $[0,1]$.
By the observations, we can extended it to a diffeomorphism
\begin{equation}
\label{eq:7-1}
\bigcup_{0\le t\le 1}X_{\mu_i(t)}\setminus {\rm Int}A_i(t)\cong
(X_0\setminus {\rm Int}A_i(0))\times [0,1]
\end{equation}
(${\rm Int}A_i(t)$ is the interior of $A_i(t)$).
Moreover, using $\{{\iota_i}^t\}_{0\le t\le 1}$ again
we can extend (\ref{eq:7-1}) to a homeomorphism from
$\bar{\pi}^{-1}(\mu_i)=\bigcup_{0\le t \le 1}X_{\mu_i(t)}$ to $M_i$
also compatible with the projections onto $[0,1]$.
Here $(X_0\setminus {\rm Int}A_i(0))\times [0,1]$ is understood to be
a subspace of $M_i$ by an obvious manner.

The exsistence of the homeomorphism $\bar{\pi}^{-1}(\mu_i)\cong M_i$
implies that $C_i(0)$ is the vanishing cycle along $\mu_i$.
In summary, we have proved the following.
\begin{prop}
\label{prop:7-1}
The monodromy $\rho^{\prime}(\lambda_i)\in \Gamma_3$ is the right hand Dehn twist along $C_i$. 
\end{prop}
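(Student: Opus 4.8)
The plan is to reduce the statement to the already-established Picard--Lefschetz description of the local monodromy and then to carry out the purely topological identification of the vanishing cycle. Since $\bar\pi\colon\bar{\mathcal{F}}\to L$ is a Lefschetz fibration whose only critical value lying on the path $\mu_i$ is its endpoint $v_i$, the Picard--Lefschetz formula recalled in Section 5 shows that $\rho'(\lambda_i)$ is the right hand Dehn twist along the vanishing cycle associated to $\mu_i$. Thus everything comes down to proving that this vanishing cycle is the simple closed curve $C_i(0)\subset X_0$, which I must then recognize as $C_i$ of Figure 1.1 under the evident identification of $X_0$ with the genus $3$ surface.

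First I would confirm that $C_i(t)$ is a continuously varying simple closed curve that collapses as $t\to 1$. For $t\in[0,1)$ the roots of $G^{\mu_i(t)}$ are simple, so the topological type of the branched covering $p_{\mu_i(t)}$ coincides with that of $p_0$ pictured in Figure 6.3; lifting the arc ${\gamma_i}^t([0,1])$ through this covering (with Observation 3 supplying the base case $t=0$) exhibits $C_i(t)$ as a circle obtained from two sheets of $p_{\mu_i(t)}$ that come together at the two critical points over $a_{\delta}(t)$ and $a_{\varepsilon}(t)$. By Observations 1 and 2, as $t\to 1$ these two critical points merge at the unique node of $X_{v_i}$, so $C_i(1)=p_{v_i}^{-1}(a_{\delta}(1))$ degenerates to a single point, exactly as a vanishing cycle should.

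Next I would build the explicit comparison with the standard nodal model, which is the heart of the argument. Choosing the disks ${\iota_i}^t(D)$ so that $A_i(t)$ is the component of $p_{\mu_i(t)}^{-1}({\iota_i}^t(D))$ containing $C_i(t)$, the behavior of the covering near the colliding pair of critical values makes $A_i(t)$ an annulus with core $C_i(t)$ for $t<1$, pinching at $t=1$ along a non null-homologous circle. Off the annuli, the simplicity of the roots for $t<1$ forces $\bigcup_{0\le t\le 1}X_{\mu_i(t)}\setminus{\rm Int}A_i(t)$ to be a trivial bundle, yielding the product diffeomorphism (\ref{eq:7-1}); extending it across the boundaries $\partial A_i(t)$ produces a fiber-preserving homeomorphism $\bar\pi^{-1}(\mu_i)\cong M_i$, where $M_i$ is $X_0\times[0,1]$ with $C_i(0)\times\{1\}$ crushed to a point. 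This homeomorphism is precisely the assertion that $C_i(0)$ is the vanishing cycle along $\mu_i$, and together with the first paragraph it finishes the proof.

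I expect the main obstacle to be this homeomorphism $\bar\pi^{-1}(\mu_i)\cong M_i$: one must check that the annular neighborhoods $A_i(t)$ degenerate in exactly the nodal manner and that they glue compatibly to the trivial product over the complement. This rests on a local analysis of $p_v$ near the two merging critical values—showing that the pinch occurs along $C_i(0)$ and nowhere else—and it is here that Observations 1 and 2 are indispensable, since they guarantee that only a single pair of critical values collides and that the fiber over the node loses exactly one point, ruling out any competing degeneration.
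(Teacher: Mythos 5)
Your proposal follows essentially the same route as the paper: reduce to the Picard--Lefschetz formula, track the colliding critical values via Observations 1--3, take $C_i(t)$ as the lift of the connecting arc ${\gamma_i}^t$, and establish that $C_i(0)$ is the vanishing cycle by trivializing the family off the annuli $A_i(t)$ and extending to the fiber-preserving homeomorphism $\bar{\pi}^{-1}(\mu_i)\cong M_i$. The argument and the role you assign to each ingredient match the paper's proof.
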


Now we can complete the proof of Theorem \ref{thm:1-1}.

\begin{proof}[Proof of Theorem \ref{thm:1-1}]
We write $L_0$ instead $L=L(7/8,3/4)$ and let $L_1$ be a line of $\mathbb{P}^3$
meeting $X^{\vee}$ transversely. Choose a base point $b_1\in L_1\setminus (L_1\cap X^{\vee})$.
Since the set of all lines of $\mathbb{P}^3$
meeting $X^{\vee}$ transversely is Zariski open hence path connected, there exist
a continuous family $\{ L(t)\}_{t\in [0,1]}$ of lines of $\mathbb{P}^3$
such that $L(t)$ meets $X^{\vee}$ transversely and $L(0)=L_0,L(1)=L_1$.
Let $\mathcal{F}^{\prime}_t:=\{ (x,H)\in \mathbb{P}_3 \times
(L(t)\setminus (L(t)\cap X^{\vee})); x\in H\cap X \}$. Then there exist 
continuous families of homeomorphisms
$\{ \psi_t\colon L_0 \setminus (L_0\cap X^{\vee})\rightarrow L(t)\setminus (L(t)\cap X^{\vee})\}_{0\le t\le 1}$
and $\{ \Psi_t\colon \mathcal{F}^{\prime}_0\rightarrow \mathcal{F}^{\prime}_t\}_{0\le t\le 1}$
such that $\pi_t^{\prime}\circ \Psi_t=\psi_t\circ \pi_0^{\prime}$ where $\pi_t^{\prime}$
is the second projection.
Now $\{ \psi_1(\lambda_i) \}_i$ is a standard generating system for
$\pi_1(L_1 \setminus (L_1\cap X^{\vee}),\psi_1(0))$ such that
the image of $\psi_1(\lambda_i)$ under the associated topological
monodromy is the right hand Dehn twist along $C_i$.
The result follows by considering an isomorphism
$\pi_1(L_1 \setminus (L_1\cap X^{\vee}),\psi_1(0))\cong \pi_1(L_1 \setminus (L_1\cap X^{\vee}),b_1)$
induced by a path from $\psi_1(0)$ to $b_1$.
\end{proof}

\noindent \textbf{Acknowledgments:}
The author is grateful to Professor Nariya Kawazumi for
reading a draft and comments on expositions of the paper.
He is also grateful to Masatoshi Sato for his advice
which made the arguments in Section 7 clear.
This research is supported by JSPS Research
Fellowships for Young Scientists (19$\cdot$5472).

\noindent \textsc{Yusuke Kuno\\
Graduate School of Mathematical Sciences,\\
The University of Tokyo,\\
3-8-1 Komaba Meguro-ku Tokyo 153-0041, JAPAN}

\noindent \texttt{E-mail address:kunotti@ms.u-tokyo.ac.jp}

\end{document}